\theoremstyle{plain}
\newtheorem{theorem}{Theorem}[section]
\newtheorem{claim}{Claim}[theorem]
\newtheorem*{claim*}{Claim}
\newtheorem{lemma}[theorem]{Lemma}
\newtheorem{prop}[theorem]{Proposition}
\theoremstyle{remark}
\newtheorem{rmk}[theorem]{Remark}
\newtheorem*{rmk*}{Remark}
\newtheorem*{note*}{Note}
\theoremstyle{definition}
\newtheorem{defn}[theorem]{Definition}
\newtheorem*{defn*}{Definition}
\newtheorem{example}[theorem]{Example}
\newtheorem*{example*}{Example}
\newtheorem*{examples*}{Examples}
\numberwithin{equation}{section}
 \DeclareMathOperator{\id}{id}
\DeclareMathOperator{\dom}{dom}  \DeclareMathOperator{\lsp}{span}
 \DeclareMathOperator{\clsp}{{\overline{span}}}
\DeclareMathOperator{\qc}{{\square_{Claim}}}
\newcommand{\eps}{\varepsilon}
\newcommand{\pc}{\renewcommand{\qedsymbol}{$\qc$}}
\newcommand{\field}[1]{\mathbb{#1}}
\newcommand{\CC}{\field{C}}
\newcommand{\NN}{\field{N}}
\newcommand{\Gg}{{\mathcal G}}
\newcommand{\Zz}{{\mathcal Z}}
\newcommand{\FE}{{\mathcal{FE}}}
\title{The Path Space of a Directed Graph}
\author[S.B.G. Webster]{Samuel B.G. Webster}
\address{Samuel Webster\\
School of Mathematics and Applied Statistics\\
University of Wollongong\\
NSW 2522\\
AUSTRALIA}
\email{sbgwebster@gmail.com}
\subjclass[2010]{Primary 46L05}
\keywords{Graph algebra, directed graph}
\thanks{This research was supported by the ARC Discovery Project DP0984360.}
\date{February 7, 2011.}
\begin{document}
\begin{abstract}
We construct a locally compact Hausdorff topology on the path space of a directed graph $E$, and identify its boundary-path space $\partial E$ as the spectrum of a commutative $C^*$-subalgebra $D_E$ of $C^*(E)$. We then show that $\partial E$ is homeomorphic to a subset of the infinite-path space of any desingularisation $F$ of $E$. Drinen and Tomforde showed that we can realise $C^*(E)$ as a full corner of $C^*(F)$, and we deduce that $D_E$ is isomorphic to a corner of $D_F$. Lastly, we show that this isomorphism implements the homeomorphism between the boundary-path spaces.
\end{abstract}
\maketitle

\section*{Introduction}
Cuntz and Krieger introduced and studied $C^*$-algebras associated to finite $(0,1)$-matrices in~\cite{CK1980}. Within a year, Enomoto and Watatani showed in~\cite{EW1980} how to interpret the Cuntz-Krieger relations and the hypotheses of Cuntz and Krieger's main theorems very naturally in terms of directed graphs. This opened many doors to operator algebraists: graph $C^*$-algebras have provided a rich supply of very tractable examples. In particular, the combinatorial properties of a graph are strongly tied to the algebraic properties of its $C^*$-algebra. Graph $C^*$-algebras include (up to Morita equivalence) all AF algebras~\cite{Drinen2000} and all Kirchberg algebras with free abelian $K_1$~\cite{Szyma'nski2002a}, as well many non-simple examples of purely infinite nuclear $C^*$-algebras.

The original analyses graph $C^*$-algebras utilised the powerful theory of groupoid $C^*$-algebras \cite{Renault1980}. In \cite{KPRR1997}, Kumjian, Pask, Raeburn and Renault built a groupoid $\Gg_E$ from each directed graph $E$, then using Renault's theory of groupoid $C^*$-algebras, they defined the graph $C^*$-algebra to be the groupoid $C^*$-algebra $C^*(\Gg_E)$. By interpreting Renault's hypotheses in terms of the graph $E$ from which $\Gg_E$ was built, they were able to link properties of $E$ to those of $C^*(\Gg_E)$. The analysis of \cite{KPRR1997} establishes among other things that $C^*(\Gg_E)$ is the universal $C^*$-algebra generated by a collection of partial isometries satisfying relations now known as the Cuntz-Krieger relations (Section~\ref{1graph alg}).

The results of \cite{KPRR1997} were proved only for graphs in which each vertex emits and receives only finitely many edges. A significantly different way to construct $\Gg_E$ was introduced by Paterson in \cite{Paterson2002}. Paterson's construction proceeds via inverse semigroups, and provides a framework for a groupoid-based analysis of the graph algebras of directed graphs which may contain infinite receiving vertices. Common to both groupoid models is that the locally compact Hausdorff unit space $\Gg_E^0$ of the groupoid is a collection of paths in the graph: for a row-finite graph with no sources, $\Gg_E^0$ is the collection of right-infinite paths in $E$; but for more complicated graphs, the infinite paths are replaced with the \emph{boundary paths}. Hence the path space of a graph as a topological space is of great importance in the context of graph $C^*$-algebras.

Drinen and Tomforde~\cite{DT2005} construct from an arbitrary directed graph $E$ a row-finite graph $F$ such that $C^*(F)$ contains $C^*(E)$ as a full corner. Their construction adds an infinite path to each source and each infinite receiver in $E$. In the case of infinite receivers, the incoming edges are distributed along the appended infinite path. The resulting graph $F$ is called a \emph{Drinen-Tomforde desingularisation of $E$}. At an infinite receiver, there is a choice in the way which edges are distributed along the appended path, and hence a Drinen-Tomforde desingularisation of $E$ is not unique. Motivated by~\cite{DT2005}, Raeburn developed a `collapsing' technique in~\cite[Section~5]{Raeburn2005} which we use in this paper. He defined a desingularisation by identifying paths in a row-finite graph $F$ with no sources which we call \emph{collapsible paths} (Definition~\ref{colpathdef}), then `collapsed' these paths to yield a graph $E$ such that by applying Drinen and Tomforde's construction (and making the right choices along the way), we can recover $F$.

This paper is an expos\'e of path spaces of directed graphs, and how they are affected by desingularisation. We begin in Section~\ref{1graph chap} by recalling the standard definitions and notation for directed graphs, their $C^*$-algebras, and define a Drinen-Tomforde desingularisation.

In Section~\ref{1graph top} we construct a topology on the path space of an arbitrary directed graph $E$, and show that it is a locally compact Hausdorff topology. Although such results can already be found in the literature, arguments are not provided in this generality. Our construction follows the approach of Paterson and Welch~\cite{PW2005}, and we fix a minor oversight in their work. We construct the homeomorphism $\phi_\infty$, which identifies a subset of the infinite-path space of a desingularisation with the boundary-path space in the original graph.

In Section~\ref{1graph diag}, we define the diagonal $C^*$-subalgebra of a graph $C^*$-algebra. We then build the homeomorphism $h_E$ between the boundary-path space $\partial E$ of an arbitrary graph $E$ and the spectrum of its diagonal. We show that for a desingularisation $F$ of $E$, the isomorphism which embeds $C^*(E)$ as a full corner in $C^*(F)$ implements the homeomorphism $\phi_\infty$ constructed in Section~\ref{1graph top} via the homeomorphisms $h_E$ and $h_F$.

\subsection*{Acknowledgements}
The work contained in this paper is from the author's PhD thesis; as such I extend many thanks to my PhD supervisors Iain Raeburn and Aidan Sims for their support and willingness to proofread and guide my work.

\section{Preliminaries}\label{1graph chap}

A \emph{directed graph} $E = (E^0, E^1, r, s)$ consists of two countable sets $E^0$, $E^1$ and functions
$r,s: E^1 \to E^0$. The elements of $E^0$ are called \emph{vertices} and the elements of $E^1$ are called \emph{edges}. For each edge $e$, we call $s(e)$ the \emph{source} of $e$ and $r(e)$ the \emph{range} of $e$; if $s(e) = v$ and $r(e) = w$, we say that $v$ \emph{emits} $e$ and that $w$ \emph{receives} $e$, or
that $e$ is an edge from $v$ to $w$. Since all graphs in this paper are directed, we often just call a directed graph $E$ a graph.

We follow the convention of~\cite{Raeburn2005}, so that a \emph{path of length} $n$ in a directed graph $E$ is a sequence $\mu = \mu_1 \dots \mu_n$ of edges in $E$ such that $s(\mu_i) = r(\mu_{i+1})$ for $1 \leq i \leq n-1$,
%
%
%
We write $|\mu| = n$ for the length of $\mu$, and regard vertices as paths of length $0$; we denote by $E^n$ the set of paths of length $n$, and define $E^* := \bigcup_{n\in\NN} E^n$. We extend the range and source maps to $E^*$ by setting $r(\mu) = r(\mu_1)$ and $s(\mu) = s(\mu_{|\mu|})$ for $|\mu| >1$, and $r(v) = v = s(v)$ for $v \in E^0$. If $\mu$ and $\nu$ are paths with $s(\mu) = r(\nu)$, we write $\mu\nu$ for the path $\mu_1 \dots \mu_{|\mu|} \nu_1 \dots \nu_{|\nu|}$. For a set of vertices $V \subset E^0$ and a set of paths $F \subset E^*$, we define $VF:= \{\mu \in F: r(\mu) \in V\}$ and $FV := \{\mu \in F: s(\mu) \in V\}$. If $V  = \{v\}$ we drop the braces and write $vF$ and $Fv$. We define the infinite paths $E^\infty$ of $E$ to be infinite strings $\mu_1 \dots \mu_n \dots$ such that $s(\mu_i) = r(\mu_{i+1})$ for all $i \geq 1$, we extend the range map to $E^\infty$ by setting $r(\mu) = r(\mu_1)$, and for a set of vertices $V \subset E^0$, we define $VE^\infty:=\{x \in E^\infty : r(x) \in V\}$.

If $r^{-1}(v)$ is finite for every $v \in E^0$, we say that $E$ is \emph{row-finite}. A vertex $v$ is \emph{singular} if $|r^{-1}(v)| \in \{0,\infty\}$. The \emph{boundary paths} of $E$ are defined by $\partial E := E^\infty \cup \{ \alpha \in E^*: s(\alpha)\text{ is singular}\}$.

\subsection{Graph $C^*$-algebras}\label{1graph alg}
Let $E$ be a directed graph. Define
\[
    E^{\leq n} := \{\mu \in E^* : |\mu| = n,\text{ or }|\mu|<n\text{ and } s(\mu)E^1=\emptyset\}.
\]

A \emph{Cuntz-Krieger $E$-family} consists of mutually orthogonal projections $\{s_v : v \in E^0\}$ and partial isometries $\{s_\mu : \mu \in E^*\}$ such that $\{s_\mu : \mu \in E^{\leq n}\}$ have mutually orthogonal ranges for each $n\in \NN$, and such that
\begin{enumerate}
\renewcommand{\theenumi}{CK\arabic{enumi}}
\item $s_\mu^*s_\mu = s_{s(\mu)}$ for every $\mu\in E^*$\label{dgck1};
\item $s_\mu s_\mu^* \leq s_{r(\mu)}$ for every $\mu \in E^*$\label{dgck2}; and
\item $s_v = \sum_{\nu\in vE^{\leq n}} s_\nu s_\nu^*$ for every $v \in E^0$ and $n \in \NN$ such that $|vE^{\leq n}| < \infty$\label{dgck3}.
\end{enumerate}
The $C^*$-algebra of $E$ is the universal $C^*$-algebra $C^*(E)$ generated by a Cuntz-Krieger $E$-family $\{s_\mu : \mu \in E^*\}$. The existence of such a $C^*$-algebra follows from an argument like that of~\cite[Proposition~1.21]{Raeburn2005}.

These relations are slightly different to the Cuntz-Krieger relations appearing elsewhere (for example in~\cite{BHRS2002, DT2005, Raeburn2005}), but straightforward calculations show that our definition is equivalent to the one usually stated. For details refer to~\cite[Section~2.3]{WebsterPhD}.

\subsection{Desingularisation}
Let $\mu \in E^\infty$ and $e \in E^1$. We say that \emph{$e$ exits $\mu$} if there exists $i \geq 1$ such that $s(e) = s(\mu_i)$ and $e \neq \mu_i$; note that edges with source $r(\mu)$ are not considered exits of $\mu$. We say that \emph{$e$ enters $\mu$} if there exists $i \geq 1$ such that $r(e) = r(\mu_i)$ and $e \neq \mu_i$.
\begin{defn}\label{colpathdef}

Let $E$ be a directed graph. We say that an infinite path $\mu \in E^{\infty}$ is \emph{collapsible} if
\begin{enumerate}
\renewcommand{\theenumi}{C\arabic{enumi}}
    \item $\mu$ has no exits\label{c1},
    \item $r^{-1}(r(\mu_i))$ is finite for every $i$,\label{c2}
    \item $r^{-1}(r(\mu)) = \{ \mu_1 \}$\label{c3},
    \item $\mu_i \neq \mu_j$ for all $i \neq j$, and\label{c4}
    \item $\mu$ has either zero or infinitely many entries.\label{c5}
\end{enumerate}
\end{defn}

In~\cite[p42]{Raeburn2005} only~\eqref{c1}--\eqref{c3} are present. Condition~\eqref{c4} was added after we realized that a cycle with no entrance could be collapsible under the original definition, and~\eqref{c5} was added to ensure that we only collapse paths (a process described in Remark~\ref{howtocollapse}) which yield singular vertices - thus avoiding a complication in the proof of~\cite[Proposition~5.2]{Raeburn2005}\footnote{The proof of~\cite[Proposition~5.2]{Raeburn2005} contained an error when proving that the Cuntz-Krieger relation holds in $F_\mu$ at the vertex resulting from collapsing a path $\mu$ in with finitely many entries.}, the key result for this theory. These conditions are not all necessary to carry out the process of collapsing, but they ensure the simplest formulae, and also that we collapse as few paths as possible.

\begin{rmk}\label{howtocollapse}\label{desingpaths}
As the name suggests, we will collapse these paths to form a new graph. Suppose that $\mu$ is a collapsible path in a row-finite graph $F$. Define $s_{\infty}(\mu):= \{ s(\mu_i) : i \geq 1\}$ and
\[
    F^*(\mu) := \{ \nu \in F^* : |\, \nu| > 1, \nu = \mu_1
\mu_2\ \dots\mu_{|\,\nu-1|}e\text{ for some } e \neq \mu_{|\nu|}\}.
\]
Set $F_\mu^0:= F^0 \setminus s_{\infty}(\mu)$ and $F_\mu^1 := \big( F^1 \setminus (r^{-1}(s_{\infty}(\mu))\cup\{\mu_1\})\big)\cup\{e_\nu : \nu \in F^*(\mu)\},$ and extend the range and
source maps to $F_\mu^1$ by setting $r(e_\nu):=r(\nu)=r(\mu)$ and $s(e_\nu):=s(\nu)$. Then $F_\mu$ is the graph obtained by collapsing the path $\mu$ in $F$. Notice that for $\alpha \in F_\mu^*$, $s(\alpha)$ is singular if and only if $s(\alpha) = r(\mu)$.

Given a collection $M$ of collapsible paths such that no two paths in $M$ have any edge or vertex in common, we call the paths in $M$ \emph{disjoint}. We can carry out the process described in Remark~\ref{howtocollapse} on all the paths in $M$ simultaneously, yielding a graph $F_M$ which may no longer be row-finite.

\end{rmk}

\begin{example}
Collapsing the path $\nu_3\nu_4\dots$ in the graph on the left yields the graph on the right.

\begin{tikzpicture}[>=stealth,scale=0.7]
    \node (v) at (0,0) {$v$};
    \node (u) at (2,0) {$u$}
        edge[->] node[auto,swap] {$\nu_1$} (v);
    \node (w) at (4,0) {$w$}
        edge[->] node[auto,swap] {$\nu_2$} (u);
    \node (t) at (6,0) {$t$}
        edge[->] node[auto,swap] {$\nu_3$} (w);
    \node (z) at (8,0) {$\dots$}
        edge[->] node[auto,swap] {$\nu_4$} (t);
    \node (f) at (2,-1.8) {$\bullet$}
        edge[<-] node[auto] {$f$} (v)
        edge[->] node[auto] {$g$} (u);
\end{tikzpicture}\qquad\qquad
\begin{tikzpicture}[>=stealth,scale=0.7]
    \node (v) at (0,0) {$v$};
    \node (u) at (2,0) {$u$}
        edge[->] node[auto,swap] {$\nu_1$} (v);
    \node (w) at (4,0) {$w$}
        edge[->] node[auto,swap] {$\nu_2$} (u);
    \node (f) at (2,-1.8) {$\bullet$}
        edge[<-] node[auto] {$f$} (v)
        edge[->] node[auto] {$g$} (u);
\end{tikzpicture}

Notice that the path $(\nu_1gf)^\infty := \nu_1gf\nu_1gf\dots$ is not collapsible as it fails \eqref{c4}, and $\nu_1\nu_2\dots$ is not collapsible either as it has exactly one entry, failing \eqref{c5}.
\end{example}

\begin{defn}\label{DTdesing}
Let $E$ be a directed graph. A \emph{Drinen-Tomforde desingularisation} of $E$ is a pair $(F,M)$ consisting of a row-finite graph $F$ with no sources, and a collection $M$ of disjoint collapsible paths such that $F_M \cong E$.
\end{defn}

\section{Topology}\label{1graph top}
For $\mu \in E^*$, we define the \emph{cylinder set of $\mu$} by $\Zz(\mu) := \{ \nu \in E^* \cup E^\infty : \nu = \mu\nu'\}.$ Following Paterson and Welch's approach in~\cite{PW2005}, define $\alpha:E^*\cup E^\infty \to \{0,1\}^{E^*}$ by $\alpha(w)(y) = 1$ if $w \in \Zz(y)$, and $0$ otherwise. We endow $\{0,1\}^{E^*}$ with the topology of pointwise convergence, and $W$ with the initial topology induced by $\{\alpha\}$. The following Theorem is considered a folklore result, for which we provide a proof.

\begin{theorem}\label{directed graph topology}
Let $E$ be a directed graph. For $\mu \in E^*$ and a finite subset $G \subset s(\mu)E^1$, define $\Zz(\mu\setminus G) := \Zz(\mu) \setminus \bigcup_{e \in G} \Zz(\mu e)$. Then the collection
\[
    \{ \Zz(\mu\setminus G): \mu \in E^*, G \subset s(\mu)E^1\text{ is finite}\}
\] is a basis for the initial topology induced by $\{\alpha\}$. Moreover, it is a locally compact Hausdorff topology on $E^* \cup E^\infty$.
\begin{proof}
First we consider the topology on $\{0,1\} ^{E^*}$. Given disjoint finite subsets $F,G \subset
E^*$, define sets $U_\mu^{F,G}$ to be $\{1\}$ if $\mu \in F$, $\{0\}$ if $\mu \in G$ and $\{0,1\}$ otherwise. Then the sets $N(F, G):= \prod_{\mu \in E^*}U_\mu^{F,G},$ where $F,G$ range over all finite, disjoint pairs of subsets of $E^*$, form a basis for the topology on $\{0,1\}^{E^*}$. Clearly, $\alpha$ is a homeomorphism onto its range, hence the sets $\alpha^{-1}(N(F,G))$ form a basis for a topology on $E^* \cup E^\infty$. Observe that
\[
\alpha^{-1}(N(F, G))= \lambda \in \left(\bigcap_{\mu \in F} \Zz(\mu)\right) \setminus \left(\bigcup_{\nu \in G} \Zz(\nu)\right).
\]
Notice that if $\alpha^{-1}(N(F, G))$ is non empty, then $\bigcap_{\mu \in F}
\Zz(\mu) \neq \emptyset$. This implies that for $\mu,\nu \in F$, we have either
\[
\mu \in \Zz(\nu)\text{ if }|\mu| \geq |\nu|\text{,\quad or }\nu \in \Zz(\mu)\text{ if }|\nu| > |\mu|.
\]
By choosing $\mu$ such that $|\mu| = \max\{| \nu |:\nu \in F\}$ and appropriately adjusting $G$, we see that each $\alpha^{-1}(N(F,G))$ has the form $\Zz(\mu \setminus G)$ for some $\mu \in E^*$ and finite $G \subset s(\mu)E^*$.

\begin{claim}
$\{\Zz(\mu \setminus G):\mu \in E^*, G \subset s(\mu) E^1 \text{ is finite}\}$ and $\{\Zz(\mu \setminus G): \mu \in E^*, G \subset s(\mu) E^* \text{ is finite}\}$ are bases for the same topology.
\begin{proof}
Fix $\mu \in E^*$, and a finite subset $G \subset s(\mu)E^*$. Let $\lambda \in \Zz(\mu \setminus G)$. We seek $\alpha \in E^*$ and a finite set $F \subset s(\alpha)E^1$ such that
\[
\lambda \in \Zz(\alpha \setminus  F) \subset \Zz(\mu \setminus  G).
\]
We consider two cases: $\lambda$ is finite or $\lambda$ is infinite. If $\lambda \in E^\infty$, let $N=\max \{ |\mu\nu| : \nu \in G\}$, $\alpha = \lambda_1 \dots \lambda_N$, and $F = \emptyset$. Then $\Zz(\alpha \setminus F)=\Zz(\alpha)$ clearly contains $\lambda$. Since $|\alpha| \geq |\mu\nu|$ for all $\nu \in G$, we have $\Zz(\alpha) \subset
\Zz(\mu\setminus G)$ as required.

Now suppose that $\lambda \in E^*$. Set $\alpha = \lambda$ and
\[
    F = \{ (\mu\nu)_{|\lambda| +1} : \nu \in G \text{ satisfies } |\mu\nu| > |\lambda| \}.
\]
Then $\Zz(\alpha \setminus F) = \Zz(\lambda \setminus F)$ clearly contains $\lambda$. To see that $\Zz(\lambda\setminus F) \subset \Zz(\mu \setminus G)$, fix $\beta \in \Zz(\lambda \setminus F ).$ Factor $\lambda = \mu\lambda'$, then we have $\beta = \lambda\beta' = \mu\lambda'\beta' \in \Zz(\mu)$. We now show that $\lambda'\beta' \notin \bigcup_{\nu\in G} \Zz(\nu)$. Fix $\nu \in G$. If $|\mu\nu| \leq |\lambda|$, then $|\nu| \leq |\lambda'|$. Since $\lambda' \notin \Zz(\nu)$, we have $\lambda' \beta' \notin \Zz(\nu)$. If $|\mu\nu| > |\lambda|$, then since $\beta'_1 \notin F$, we have $(\mu\lambda'\beta')_{|\lambda| +1} = \beta'_1 \neq (\mu\nu)_{|\lambda|+1}$. So $(\lambda'\beta')_{|\lambda| - |\mu| + 1} \neq \nu_{|\lambda| - |\mu| + 1}$.\pc
\end{proof}
\end{claim}
So the collection $\{ \Zz(\mu\setminus G) : \mu \in E^*, G \subset s(\mu)^1\text{ is finite}\}$ is a basis for our topology on $E^* \cup E^\infty$.

To see that $E^* \cup E^\infty$ is a locally compact Hausdorff space, we follow the strategy of~\cite{PW2005} to show that $\Zz(v)$ is compact for each $v \in E^0$. Since $\alpha$ is a homeomorphism onto its range, it suffices to prove that $\alpha(\Zz(v))$ is compact. Since $\{0,1\}^{E^*}$ is compact, we show that $\alpha(\Zz(v))$ is closed.

Let $\{\omega^{(n)} \in \Zz(v) : n \in \NN\}$ be such that $\alpha(\omega^{(n)}) \to f \in \{0,1\}^{E^*}$. We seek $\omega \in \Zz(v)$ such that $f = \alpha(\omega)$. Let $A:=\{\mu \in E^* : \alpha(\omega^{(n)})(\mu) \to 1\}.$ Then if $\mu,\nu \in A$, for large $n$ we have that $w^{(n)} \in \Zz(\mu) \cap \Zz(\nu)$. In particular, $\Zz(\mu) \cap \Zz(\nu) \neq \emptyset$; without loss of generality say $\mu = \nu\nu'$, and denote it $\beta_{\mu,\nu}$. Then for large $n$ we have that $\omega^{(n)} \in \Zz(\beta_{\mu,\nu})$, so $\beta_{\mu,\nu} \in A$.

Since $A$ is countable, we can list $A = \{\nu^1,\nu^2,\dots,\nu^m,\dots\}.$ Let $y^1:=\nu^1$, and iteratively define $y^n := \beta_{y^{n-1},\nu^n}$. Then $\{y^n : n \in \NN\}$ satisfy $y^n_1 y^n_2 \dots y^n_{|y^{n-1}|} = y^{n-1}$, and hence they determine a unique path $\omega \in E^*\cup E^\infty$.

To see that $\alpha(\omega^{(n)}) \to \alpha(\omega)$, we first show that $\nu \in A$ if and only if $\omega \in \Zz(\nu)$. Clearly, $\omega \in \Zz(y^m) \subset \Zz(\nu^m)$ for each $\nu^m \in A$. Conversely, let $\omega \in \Zz(\nu^m)$. Then $y^m \in \Zz(\nu^m) \cap A$ implies that for large enough $n$ we have $\omega^{(n)} \in \Zz(y^m) \subset \Zz(\nu^m)$, so $\nu^m \in A$. Now fix $\nu \in E^*$. We will show that $\alpha(\omega^{(n)})(\nu) \to \alpha(\omega)(\nu)$. If $\alpha(\omega)(\nu) = 1$, then $\omega \in \Zz(\nu)$. So $\nu \in A$, and hence $\omega^{(n)}(\nu) \to 1$. If $\alpha(\omega)(\nu) = 0$, we have $\omega \notin \Zz(\nu)$, forcing $\alpha(\omega^{(n)})(\nu) \to 0$. So $\alpha(\omega^{(n)}) \to \alpha(\omega)$. Hence $\alpha(\Zz(v))$ is closed.
\end{proof}
\end{theorem}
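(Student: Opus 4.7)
The plan is to transfer the topology question to the compact Hausdorff space $\{0,1\}^{E^*}$ via $\alpha$ and identify the basic open sets of its product topology explicitly. I would start from the subbasis consisting of cylinders $N(F,G)$ that pin coordinates in a finite set $F$ to $1$ and in a disjoint finite set $G$ to $0$, and compute $\alpha^{-1}(N(F,G))$ as the set of paths extending every element of $F$ but no element of $G$. The key observation is that nonemptiness of this pullback forces the cylinders indexed by $F$ to be pairwise nested by extension; choosing $\mu \in F$ of maximal length and rewriting the conditions from $G$ as forbidding a finite set of extensions of $\mu$, I present the pullback in the form $\Zz(\mu \setminus G')$ with $G' \subset s(\mu)E^*$ finite.

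Next I would prove the internal claim that the collection using $G' \subset s(\mu)E^*$ and the one using $G \subset s(\mu)E^1$ generate the same topology. Given $\lambda \in \Zz(\mu \setminus G')$, I would split on whether $\lambda$ is infinite or finite: in the infinite case a sufficiently long initial segment $\alpha$ of $\lambda$ satisfies $\Zz(\alpha) \subset \Zz(\mu \setminus G')$ with no forbidden edges required; in the finite case, taking $\alpha = \lambda$ and forbidding the single next edge $(\mu\nu)_{|\lambda|+1}$ for each $\nu \in G'$ with $|\mu\nu| > |\lambda|$ produces a suitable finite $F \subset s(\lambda)E^1$.

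Hausdorffness of the resulting topology is then automatic since $\alpha$ is injective into a Hausdorff space, and local compactness reduces to showing that each $\Zz(v)$ is compact, because any basic open $\Zz(\mu \setminus G)$ lies inside $\Zz(r(\mu))$. Since $\alpha$ is a homeomorphism onto its range and $\{0,1\}^{E^*}$ is compact, it suffices to prove that $\alpha(\Zz(v))$ is closed, and this is where I expect the main obstacle to lie. My strategy is to take a sequence $\alpha(\omega^{(n)}) \to f$ with $\omega^{(n)} \in \Zz(v)$, let $A := \{\mu \in E^* : f(\mu) = 1\}$, and observe that pointwise convergence forces any two elements of $A$ to be comparable by extension and to admit a common extension also in $A$. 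Enumerating $A = \{\nu^1, \nu^2, \ldots\}$ and inductively choosing $y^n$ to be the longer common extension of $y^{n-1}$ and $\nu^n$ produces a coherent chain of initial segments determining a unique path $\omega \in \Zz(v)$; a short verification that $\mu \in A$ exactly when $\omega \in \Zz(\mu)$ then gives $\alpha(\omega) = f$, closing $\alpha(\Zz(v))$.
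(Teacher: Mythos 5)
Your proposal is correct and follows essentially the same route as the paper's proof: pulling back the product-topology cylinders $N(F,G)$ through $\alpha$, reducing to sets of the form $\Zz(\mu\setminus G)$ with $G\subset s(\mu)E^*$, proving the same claim (with the same finite/infinite case split) that edge-complements suffice, and establishing compactness of $\Zz(v)$ by the identical limit-path construction showing $\alpha(\Zz(v))$ is closed. The only cosmetic difference is that you define $A$ via $f(\mu)=1$ rather than via $\alpha(\omega^{(n)})(\mu)\to 1$, which is the same set since the convergence is pointwise with values in $\{0,1\}$.
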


\begin{theorem}\label{dg boundary is top inv under desing}
Let $E$ be a directed graph and $F$ be a Drinen-Tomforde desingularisation of $E$. Then $E^0F^\infty$ is homeomorphic to $\partial E$.
\end{theorem}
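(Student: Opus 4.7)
The plan is to construct an explicit bijection $\phi_\infty: E^0 F^\infty \to \partial E$ by collapsing excursions on elements of $M$, and then verify continuity in both directions using the basis from Theorem~\ref{directed graph topology}.

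\emph{Setup and definition.} Recall from Remark~\ref{desingpaths} that collapsing $\mu \in M$ removes every $\mu_i$ and every entry of $\mu$, and installs a single new edge $e_\nu$ for each $\nu \in F^*(\mu)$, where $\nu = \mu_1 \ldots \mu_{n-1} e$ records entering $\mu$ via an entry $e$ at $r(\mu_n)$ and traversing $\mu_{n-1},\ldots,\mu_1$ back to $r(\mu)$. Because $\mu$ has no exits (\ref{c1}), the only $F$-edge with source in $s_\infty(\mu)$ is the appropriate $\mu_j$; hence any contiguous sub-path of $x \in F^\infty$ whose interior vertices lie in $s_\infty(\mu)$ is either a finite excursion $\mu_1\ldots\mu_{n-1}e \in F^*(\mu)$ or the entire tail $\mu_1\mu_2\ldots$ Since $M$ consists of disjoint paths, excursions on distinct $\mu \in M$ do not interfere. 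I define $\phi_\infty(x)$ by parsing $x$ into edges surviving in $F_M^1$ and maximal $\mu$-excursions, then replacing each finite excursion $\nu$ by $e_\nu$. If $x$ has no infinite excursion then $\phi_\infty(x) \in E^\infty$; otherwise $x$ terminates with a tail along some $\mu \in M$ and $\phi_\infty(x) \in E^*$ ends at $r(\mu)$, which is singular in $E$ by Remark~\ref{desingpaths}. In either case $\phi_\infty(x) \in \partial E$.

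\emph{Bijectivity and continuity.} The inverse $\psi: \partial E \to E^0 F^\infty$ lifts each $e_\nu$ back to the path $\nu$ in $F$, and when $\alpha \in \partial E$ is finite (ending at $r(\mu)$ for the unique $\mu \in M$ with $r(\mu)=s(\alpha)$) appends the tail $\mu_1\mu_2\ldots$; both composites are the identity by direct inspection. For continuity, it suffices to consider the basis $\{\Zz(\beta \setminus H)\}$. Given a basic cylinder $\Zz_E(\beta \setminus H)$ in $\partial E$, its preimage under $\phi_\infty$ is $\Zz_F(\psi(\beta) \setminus H') \cap E^0 F^\infty$, where $H'$ is obtained from $H$ by lifting each $e_\nu \in H$ to the corresponding edge of $F$ at the correct position along the associated $\mu$; this is open. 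In the reverse direction, given a basic $\Zz_F(\gamma \setminus K) \cap E^0 F^\infty$, I extend $\gamma$ minimally to $\tilde\gamma$ so that $s(\tilde\gamma) \in E^0$ (completing any in-progress excursion, which is forced by the no-exit condition) and check that its image under $\phi_\infty$ is a basic cylinder around the collapse of $\tilde\gamma$ in $\partial E$.

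\emph{Main obstacle.} The trickiest case will be continuity at paths $x \in E^0 F^\infty$ with a terminal $\mu$-tail: $F$-cylinders can specify arbitrarily deep prefixes of that tail, yet the image in $\partial E$ is only the finite boundary path $\phi_\infty(x)$. The resolution is that once an $F$-cylinder prescribes the collapsed prefix of $x$ together with even one edge of $\mu$, the no-exit property forces every path in the cylinder to continue along $\mu$ forever, so the $F$-cylinder projects to $\{\phi_\infty(x)\} = \Zz_E(\phi_\infty(x))$, which is open in $\partial E$ since $r(\mu)$ is singular. The remainder is bookkeeping for the finite forbidden-edge sets through the collapse/lift.
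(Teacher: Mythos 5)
Your definition of $\phi_\infty$ by block-parsing, the identification of the inverse, and the easy halves of the continuity check all match the paper's argument. The problem is that both of your continuity arguments go wrong at exactly the delicate case, and your ``main obstacle'' resolution rests on a false claim. For the forward direction: if $H$ contains a collapsed edge $e_\nu$ with $\nu=\mu_1\cdots\mu_{k-1}e\in F^*(\mu)$ (and every element of $F^*(\mu)$ has length at least $2$), then no basic set $\Zz_F(\psi(\beta)\setminus H')$ can equal the preimage, because the basis only forbids edges at $s(\psi(\beta))$, not continuations of length $\geq 2$: a path $\psi(\beta)\nu y$ lies in $\Zz_F(\psi(\beta)\setminus H')$ unless $\mu_1\in H'$, yet forbidding $\mu_1$ also removes the tail $\psi(\beta)\mu$ and all excursions through later entries, both of which do map into $\Zz_E(\beta\setminus H)$. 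So ``lifting $e_\nu$ to the correct position along $\mu$'' does not produce an identity of the claimed form; the preimage is open, but one must argue as the paper does (around a point $\phi^{-1}(\beta)\mu$ take $\Zz_F(\phi^{-1}(\beta)\mu_1\cdots\mu_N)$ with $N$ exceeding the length of every lift of an element of $H$), or separately prove cylinders are clopen -- your sketch does neither.

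More seriously, the resolution of your main obstacle conflates exits with entries. Condition (C1) controls edges whose \emph{source} lies on $\mu$; it says nothing about edges whose \emph{range} lies on $\mu$, and those entries are exactly what survives desingularisation. A path in $\Zz_F(\omega\mu_1\cdots\mu_j)\cap E^0F^\infty$ is not forced to follow $\mu$ forever: at any position $k\geq j$ it may leave via an entry, and by (C2) and (C5) entries occur at arbitrarily late positions whenever $\mu$ has entries at all. Consequently the image of that cylinder is not $\{\phi_\infty(x)\}$ but $\Zz_E(\phi(\omega)\setminus G)\cap\partial E$ with $G=\{e_\nu:\nu=\mu_1\cdots\mu_k\nu_{k+1}\in F^*(\mu),\ k<j\}$ -- precisely the finite forbidden set introduced in case (2) of the paper's proof. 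Likewise $\{\phi_\infty(x)\}=\Zz_E(\phi_\infty(x))$ fails: when $s(\phi_\infty(x))$ is an infinite receiver, $\Zz_E(\phi_\infty(x))$ contains infinitely many longer boundary paths, and the singleton is not even open, since every basic neighbourhood $\Zz_E(\phi_\infty(x)\setminus G)$ with $G$ finite still contains such extensions (your picture is correct only when $\mu$ has no entries, i.e.\ the appended-source case). The same conflation underlies your parenthetical claim that completing an in-progress excursion is ``forced'' by no-exits. So the key step -- openness of $\phi_\infty$ near boundary paths ending at infinite receivers, which is the real content of the theorem and the reason the $\Zz(\alpha\setminus G)$ sets appear in the basis at all -- is missing, and the ``bookkeeping'' you deferred is exactly where the proof lives.
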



Suppose $E$ is a directed graph, and $(F,M)$ is a Drinen-Tomforde desingularisation of $E$. Define
$F^*(M) := \bigcup_{\mu \in M} F^*(\mu).$ Define $\phi' : (F^1 \cap E^1)\cup F^*(M) \to E^1$ by $\phi'|_{F^1 \cap E^1} := \id_{F^1 \cap E^1}$ and $\phi'|_{F^*(M)} : \nu \mapsto e_\nu.$ So $\phi'$ acts as the identity on unchanged edges, and takes collapsible paths in $F$ to the associated edges in $E$.

If $\beta \in F^*$ with $r(\beta), s(\beta) \in E^0$, then $\beta$ has the form $\beta = b^1 b^2 \dots
b^n$ where each $b^k \in (F^1 \cap E^1) \cup F^*(M)$. Define $E^0F^*E^0:= \{ \beta \in F^*: r(\beta),s(\beta) \in E^0 \}$. We extend the map $\phi'$ above to a map $\phi$ on finite paths: define $\phi: E^0F^*E^0 \to E^*$ by
\begin{equation}\label{graph desing finite path bij}
\phi (\beta) := \phi(b^1b^2 \dots b^n) = \phi'(b^1)\dots\phi'(b^n).
\end{equation}
We will extend this map to $E^0F^\infty$, and ultimately show that it is a homeomorphism from $E^0F^\infty$ to $\partial E$. To do so precisely we use the following results.

\begin{lemma}\label{fhatpaths}
Let $E$ be a directed graph, and $(F,M)$ be a desingularisation of $E$. If $\lambda \in E^0F^\infty$, then either
\begin{itemize}
\item $\lambda = l^1 \dots l^k \mu$ for some $\mu \in M$ and $l^i \in (F^1 \cap E^1) \cup F^*(M)$, or
\item $\lambda = l^1 l^2 \dots l^n \dots$ where $l^i \in (F^1 \cap E^1) \cup F^*(M)$.
\end{itemize}

\begin{proof}
Fix $\lambda \in E^0F^\infty$. We construct the $l^i$ inductively. Either $\lambda_1 \in F^1 \cap E^1$, or $\lambda_1 = \mu_1$ for some $\mu \in M$. If $\lambda_1 \in F^1
\cap E^1$, then let $l^1 = \lambda_1$. If $\lambda_1 = \mu_1$, then either
\begin{enumerate}
\renewcommand{\theenumi}{\roman{enumi}}
\item $\lambda_i = \mu_i$ for all $i \in \NN$, in which case $\lambda = \mu$; or
\item there exists $k$ such that
    $\lambda_i = \mu_i$ for all $i < k$ and $\lambda_k \neq \mu_k$, in which case we set $l^1 = \mu_1 \dots \mu_{k-1} \lambda_k$. Since paths in $M$ have no edges in common, we have $l^1 \in F^*(\mu)$.
\end{enumerate}
In case (i). $\lambda = \mu$, in which case we are done. In case (ii), $\lambda = l^1\lambda'$ for some $\lambda' \in F^\infty$. Iterating will either terminate with $\lambda = l^1 \dots l^n \mu$ where $\mu \in M$, or continue ad infinitum, in which case $\lambda = l^1 \dots l^n \dots$.
\end{proof}
\end{lemma}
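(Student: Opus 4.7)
The plan is a left-to-right greedy decomposition of $\lambda$: at each stage peel off either a surviving edge or an initial segment that travels along some $\mu \in M$ until $\lambda$ first deviates from it.

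For the base step, since $r(\lambda_1) = r(\lambda) \in E^0$, the edge $\lambda_1$ is not among the deleted edges in $r^{-1}(s_\infty(\mu))$ for any $\mu \in M$, so either $\lambda_1 \in F^1 \cap E^1$, in which case I set $l^1 := \lambda_1$, or $\lambda_1 = \mu_1$ for a uniquely determined $\mu \in M$ (uniqueness from disjointness). In the latter case, I compare $\lambda$ and $\mu$ coordinate by coordinate: either $\lambda = \mu$, which places us in the first alternative with no $l^i$ preceding $\mu$, or there is a smallest $k \geq 2$ with $\lambda_k \neq \mu_k$, and $l^1 := \mu_1 \cdots \mu_{k-1} \lambda_k$ lies in $F^*(\mu) \subset F^*(M)$.

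To iterate I need the suffix of $\lambda$ after $l^1$ to begin at a vertex of $E^0$. When $l^1 \in F^1 \cap E^1$ this is automatic. When $l^1 = \mu_1 \cdots \mu_{k-1}\lambda_k$, suppose for contradiction that $s(\lambda_k) = s(\mu'_j) \in s_\infty(\mu')$ for some $\mu' \in M$ and $j \geq 1$; then~\eqref{c1} forces $\lambda_k = \mu'_j$, the identity $r(\mu'_j) = r(\lambda_k) = r(\mu_k)$ places a vertex of $\mu'$ among those of $\mu$, so disjointness of $M$ forces $\mu = \mu'$, and then~\eqref{c1},~\eqref{c3} and~\eqref{c4} together pin down $j = k$, contradicting $\lambda_k \neq \mu_k$. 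Hence $s(l^1) \in E^0$ and the shifted tail again lies in $E^0F^\infty$, so the construction applies anew; since $|l^i| \geq 1$ at every stage, each iteration consumes strictly more letters of $\lambda$.

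The dichotomy in the statement now emerges: either at some finite stage the remaining tail of $\lambda$ equals some $\mu \in M$, giving $\lambda = l^1 \cdots l^k \mu$; or the process never terminates and produces an infinite sequence $(l^n)_{n \geq 1}$ whose concatenation is $\lambda$. The step most worth scrutinising, and the obvious main obstacle, is the verification that extracting an $F^*(M)$-block leaves us at an $E^0$-vertex, because that is the only point at which the collapsibility conditions~\eqref{c1},~\eqref{c3},~\eqref{c4} and the disjointness of $M$ genuinely interact; the rest of the argument is routine bookkeeping.
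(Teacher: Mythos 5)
Your proposal is correct and follows essentially the same greedy peeling argument as the paper's proof: extract either a surviving edge of $F^1\cap E^1$ or a block $\mu_1\cdots\mu_{k-1}\lambda_k\in F^*(\mu)$ at the first deviation, and iterate. The only difference is that you explicitly verify (via~\eqref{c1},~\eqref{c3},~\eqref{c4} and disjointness of $M$) that each extracted block ends at a vertex of $E^0$ so the iteration is legitimate, a point the paper leaves implicit.
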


Define $\phi_{\infty}: E^0F^\infty \to \partial E$ by
\begin{equation}\label{graph desing boundary path bij}
\phi_\infty (\lambda) :=
\begin{cases}
    \phi(\lambda') &\text{ if } \lambda = \lambda' \mu \text{ for some }\mu \in M,\\
    \phi'(\lambda^1)\dots \phi'(\lambda^n) \dots &\text{ if } \lambda = l^1 \dots l^n \dots.
\end{cases}
\end{equation}

\begin{prop}[{\cite[Lemma 2.6a]{DT2005}}]\label{phibij}
Let $E$ be a directed graph, and $(F,M)$ be a desingularisation of $E$. Then $\phi$ and $\phi_\infty$, defined as in~\eqref{graph desing finite path bij} and~\eqref{graph desing boundary path bij} respectively, are bijections and preserve range and source.
\end{prop}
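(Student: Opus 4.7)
The approach is to construct explicit inverse maps $\psi\colon E^* \to E^0F^*E^0$ and $\psi_\infty\colon \partial E \to E^0F^\infty$ and verify that they invert $\phi$ and $\phi_\infty$ respectively. The key observation making everything work is that, as sets, $E^1 = (F^1\cap E^1) \cup \{e_\nu : \nu \in F^*(M)\}$ by the construction in Remark~\ref{howtocollapse}, so $\phi'$ is by definition a bijection from $(F^1\cap E^1)\cup F^*(M)$ onto $E^1$ which preserves range and source (using $r(e_\nu)=r(\nu)$, $s(e_\nu)=s(\nu)$). Given this, define $\psi$ on $\gamma = \gamma_1\cdots\gamma_n \in E^*$ by $\psi(\gamma) = (\phi')^{-1}(\gamma_1) \cdots (\phi')^{-1}(\gamma_n)$, which is a genuine path in $F$ because each $(\phi')^{-1}(\gamma_i)$ has the same range and source as $\gamma_i$.

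The first step is well-definedness of $\phi$: every $\beta \in E^0F^*E^0$ admits a unique factorisation $\beta = b^1\cdots b^n$ with each $b^k \in (F^1\cap E^1) \cup F^*(M)$. Proceed by induction on $|\beta|$. If $\beta_1 \in F^1\cap E^1$, set $b^1 = \beta_1$ and iterate on $\beta_2\cdots\beta_{|\beta|}$, which lies in $E^0F^*E^0$ since $s(\beta_1)\in E^0$. Otherwise $\beta_1 = \mu_1$ for some (unique, by~\eqref{c3}) $\mu \in M$; since $s(\beta) \in E^0 = F^0 \setminus s_\infty(M)$ but $s(\mu_i)\in s_\infty(\mu)$ for every $i\geq 1$, the path $\beta$ cannot stay on $\mu$ forever, so there is a minimal $k$ with $\beta_k \neq \mu_k$. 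Then $b^1 := \mu_1\cdots\mu_{k-1}\beta_k \in F^*(\mu)$ by definition, and $\beta_{k+1}\cdots\beta_{|\beta|} \in E^0F^*E^0$ because $s(b^1) = s(\beta_k) \in E^0$ (as the disjointness of $M$ forces $\beta_k$ to leave the collapsible structure). Uniqueness at each step is clear, since the cases ``$\beta_1 \in F^1\cap E^1$'' and ``$\beta_1 = \mu_1$'' are exclusive and~\eqref{c3} singles out $\mu$. Then $\phi\circ\psi = \id$ and $\psi\circ\phi = \id$ follow immediately from the fact that $\phi'$ is a bijection, and range/source preservation follows from $r(\phi(\beta)) = r(\phi'(b^1)) = r(b^1) = r(\beta)$ and the symmetric statement for $s$.

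For $\phi_\infty$, Lemma~\ref{fhatpaths} already supplies the decomposition of $\lambda \in E^0F^\infty$; essentially the same uniqueness argument as above shows the decomposition is canonical (and the two cases are exclusive since one yields a finite and the other an infinite sequence of blocks). To verify that $\phi_\infty$ takes values in $\partial E$, note that in the infinite-block case $\phi_\infty(\lambda)\in E^\infty \subset \partial E$, while in the case $\lambda = l^1\cdots l^k\mu$ we have $s(\phi_\infty(\lambda)) = s(\phi(l^1\cdots l^k)) = r(\mu)$; by the computation sketched in Remark~\ref{howtocollapse} and conditions~\eqref{c3} and~\eqref{c5}, the vertex $r(\mu)$ in $E$ receives exactly $|F^*(\mu)|$ edges, which is $0$ or $\infty$ by~\eqref{c5}, so $r(\mu)$ is singular in $E$ and $\phi_\infty(\lambda) \in \partial E$.

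To invert $\phi_\infty$, define $\psi_\infty$ on $\gamma \in \partial E$ by cases: if $\gamma \in E^\infty$, apply $(\phi')^{-1}$ edgewise and concatenate (this produces an element of $F^\infty$ because $F$ has no sources and ranges/sources match); if $\gamma \in E^*$ with $s(\gamma)$ singular, then the analysis in the previous paragraph, applied in reverse, shows $s(\gamma) = r(\mu)$ for a uniquely determined $\mu \in M$, and we set $\psi_\infty(\gamma) := \psi(\gamma)\mu$. Checking $\phi_\infty\circ\psi_\infty = \id$ and $\psi_\infty\circ\phi_\infty = \id$ is then book-keeping on the blockwise definitions, and range/source preservation transfers from $\phi$. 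The main obstacle is really just the unique-factorisation step for $\phi$ (and its analogue in Lemma~\ref{fhatpaths}); once that is in hand, everything else reduces to the fact that $\phi'$ is a bijection intertwining the range and source maps.
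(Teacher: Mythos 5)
Your proposal is correct, but be aware that the paper offers no proof of this proposition at all: it is quoted from \cite[Lemma~2.6(a)]{DT2005}, so there is no internal argument to compare against. Your unique-factorisation strategy is the natural one and dovetails with the greedy block decomposition the paper does prove in Lemma~\ref{fhatpaths}; what you supply beyond that is the uniqueness of the decomposition, the verification that $\phi_\infty$ takes values in $\partial E$, and the explicit inverses $\psi$, $\psi_\infty$, all of which is sound. Two of your justifications are attributed a little loosely and deserve tightening. First, the uniqueness of the $\mu\in M$ with $\mu_1=\beta_1$ comes from the disjointness of $M$ (distinct paths share no edges), not from~\eqref{c3}. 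Second, the claim that $s(b^1)=s(\beta_k)\in E^0$ is not purely a disjointness statement: a priori $\beta_k$ could be another edge $\mu_j$ of the \emph{same} collapsible path $\mu$, and excluding that uses~\eqref{c1} (no exits) together with~\eqref{c3} and~\eqref{c4}, while disjointness only rules out $\beta_k$ lying on a different path of $M$; the same refinement is needed wherever you assert that a block in $F^*(\mu)$ has source in $E^0$. Finally, in defining $\psi_\infty$ on finite boundary paths you invoke the converse characterisation that every singular vertex of $E$ has the form $r(\mu)$ with $\mu\in M$; this direction uses that $F$ is row-finite with no sources (it is recorded for a single collapsed path in Remark~\ref{howtocollapse}), and is worth stating explicitly rather than as ``the previous paragraph applied in reverse.'' With these small points made precise, your proof is complete and is a reasonable substitute for the external citation.
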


\begin{rmk}When working with the topology on the infinite path space of a  row-finite directed graph $F$ with no sources, the finite compliments are unnecessary~\cite[Corollary~2.2]{KPRR1997}. For a detailed proof of this statement, see the author's PhD thesis~\cite[Proposition~2.1.2]{WebsterPhD}.
\end{rmk}

\begin{proof}[Proof of Theorem~\ref{dg boundary is top inv under desing}.]
It suffices to show that $\phi_\infty$ and $\phi_\infty^{-1}$ are continuous.

To see that $\phi_\infty$ is continuous, fix a basic open set $\Zz(\alpha \setminus G) \cap \partial E$. If $\Zz(\alpha \setminus G) \cap \partial E = \emptyset$ then $\phi_\infty^{-1}(\Zz(\alpha \setminus G) \cap \partial E) = \emptyset$ is open. Suppose that $\Zz(\alpha \setminus G) \cap \partial E \neq \emptyset$, and fix $\lambda \in \phi_\infty^{-1}(\Zz(\alpha \setminus G) \cap \partial
E)$. We seek $\gamma \in F^*$ such that
\[
    \lambda \in \Zz(\gamma) \cap E^0F^\infty \subset \phi_\infty^{-1}(\Zz(\alpha \setminus G) \cap \partial E).
\]
We consider two cases:
\begin{enumerate}
\renewcommand{\theenumi}{\roman{enumi}}
\item Either $\lambda = l^1 l^2 \dots$, or $\lambda = l^1 \dots l^k \mu$ with $k > |\alpha|$; and
\item $\lambda = l^1 \dots l^{|\alpha|} \mu$.
\end{enumerate}
where $\mu \in M$, and $l^i \in (F^1 \cap E^1) \cup F^*(M)$ for each $i$.

In case (i), let $\gamma = l^1 \dots l^{|\alpha|+1}$. Clearly $\lambda \in \Zz(\gamma)
\cap E^0F^\infty$. Furthermore, for $y \in \Zz(\gamma) \cap E^0F^\infty$, $\phi'(l^1)\dots \phi'(l^{|\alpha|}) = \alpha$ and $\phi'(l^{|\alpha|+1}) \notin G$.  So $\phi_\infty (y) = \phi_\infty(l^1 \dots l^{|\alpha|+1} y') \in \Zz(\alpha \setminus G) \cap \partial E.$

In case (ii), we have that $s(\alpha)$ is singular in $E$. Since $G \subset s(\alpha)E^1$,~\eqref{c3} implies that $G \subset \phi (F^*(M))$. Let $N = \max_{\nu \in \phi^{-1}(G)}|\nu|$. Each $\nu \in G \cap E^N$ has the form $\mu_1 \dots \mu_{N-1} e$, where $e \neq \mu_N$. Set $\gamma = \phi^{-1}(\alpha)\mu_1 \dots \mu_N$. Then $\lambda = \phi^{-1}(\alpha)\mu \in \Zz(\gamma)\cap E^0F^\infty$, and $\Zz(\gamma) \cap E^0F^\infty \subset \phi_\infty^{-1}(\Zz(\alpha \setminus G) \cap \partial E$.

To see that $\phi_\infty^{-1}$ is continuous, a basic open set $\Zz(\gamma) \cap E^0F^\infty$ in $E^0F^\infty$. If $\Zz(\gamma) \cap E^0F^\infty= \emptyset$ then $\phi_\infty
(\Zz(\gamma) \cap E^0F^\infty) = \emptyset$ is open, so suppose that $\Zz(\gamma) \cap E^0F^\infty \neq
\emptyset$. Let $x \in \phi_\infty(\Zz(\gamma)\cap E^0F^\infty)$. We seek $\alpha \in E^*$ and a
finite subset $G \subset s(\alpha)E^1$ such that
\[
    x \in \Zz(\alpha \setminus G) \cap \partial E \subset \phi_\infty(\Zz(\gamma)\cap E^0F^\infty).
\]
Let $\lambda = \phi_\infty^{-1}(x)= \gamma \lambda'$ where $\lambda' \in F^\infty$. We consider two cases:
\begin{enumerate}
\renewcommand{\theenumi}{\roman{enumi}}
\item $x \in E^\infty$, or
\item $x \in E^*$ and $s(x)$ is singular.
\end{enumerate}

In case (i), $\lambda$ does not `start' with a collapsible path, so by Lemma~\ref{fhatpaths} $\lambda = l^1l^2\dots$ for some $l^i \in (E^1 \cap F^1)\cup F^*(M)$. Let $j = \min\{i\in\NN:|l^1\dots l^i| \geq |\gamma|\}$, set $\alpha = \phi(l^1 \dots l^j)$ and $G=\emptyset$. It follows that $x \in \Zz(\alpha) \cap \partial E \subset \phi_\infty(\Zz(\gamma) \cap E^0F^\infty).$

In case (ii), we have $\lambda = \gamma \lambda' = \omega\mu$ for some $\omega \in F^*$ and $\mu \in M$. Let $\alpha:=x$. Our choice of $G$ depends on $|\gamma|$, so we argue in cases:
\begin{enumerate}
\item If $|\gamma| \leq |\omega|$, let $G = \emptyset$.
\item If $|\gamma| > |\omega|$, then $\gamma = \omega\mu_1 \dots \mu_j$ for some $j \in \NN$; let
\[
    G = \{ e_\nu : \nu = \mu_1 \dots \mu_k \nu_{k+1} \in F^*(\mu)\text{, and } k <j\}.
\]
\end{enumerate}
Since $x \in \Zz(\alpha \setminus G) \cap \partial E$ by definition, we just need to show that
\[
    \Zz(x \setminus G) \cap \partial E \subset \phi_\infty(\Zz(\gamma) \cap E^0F^\infty).
\]
Fix $y=xy' \in \Zz(x \setminus G) \cap \partial E$. Since $x = \phi_\infty(\omega \mu) = \phi(\omega)$, we have $\phi_\infty^{-1}(y) = \omega\phi_\infty^{-1}(y').$ In case (1), $|\gamma| \leq |\omega|$ implies that $\omega = \gamma \omega'$ for some $\omega' \in F^*$, so
\[
\phi_\infty^{-1}(y) = \gamma\omega'\phi_\infty^{-1}(y') \in \Zz(\gamma) \cap E^0F^\infty.
\]
For case (2), observe that if $y'\in E^0$, then $y=x \in \phi_\infty(\Zz(\gamma) \cap E^0F^\infty)$ by
assumption. Suppose that $|y'|\geq 1$. Then $y'_1 = e_\nu$ for some $\nu \in F^*(\mu)$. Since $y \in \Zz(x\setminus G)$, $y'_1 \notin G$, so $\nu = \mu_1
\dots \mu_k \nu_{k+1}$ for some $k \geq j$, and thus
\[
\phi_\infty^{-1}(y) = \phi_\infty^{-1}(xy') = \omega \nu \phi_\infty^{-1}(y'_2\dots) = \gamma \mu_{j+1} \dots \mu_k \nu_{k+1} \phi_\infty^{-1}(y'_2\dots)
\]
is an element of $\Zz(\gamma) \cap E^0F^\infty.$ So $y \in \phi_\infty( \Zz(\gamma) \cap E^0F^\infty)$, and hence $\phi_\infty: E^0F^\infty \to \partial E$ is a homeomorphism.
\end{proof}

\section{The Diagonal and the Spectrum}\label{1graph diag}

For a directed graph $E$, we call $C^*(\{s_\mu s_\mu^* : \mu \in E\}) \subset C^*(E)$ the diagonal $C^*$-algebra of $E$ and denote it $D_E$, dropping the subscript when confusion is unlikely. We denote the spectrum of a commutative $C^*$-algebra $B$ by $\Delta(B)$. Given a homomorphism $\pi:A\to B$ of commutative $C^*$-algebras, we denote by $\pi^*$ the induced map from $\Delta(B)$ to $\Delta(A)$ such that $\pi^*(f)(y) = f(\pi(y))$ for all $f \in \Delta(B)$ and $y \in A$.

\begin{rmk}\label{isom for desing}
Suppose $E$ is a directed graph, and that $(F,M)$ is a Drinen-Tomforde desingularisation of $E$. Let $\{s_\mu:\mu \in E^*\}$ and $\{t_\mu : \mu \in F^*\}$ be the Cuntz-Krieger families generating $C^*(E)$ and $C^*(F)$. Then it follows from \cite[Proposition~5.2]{Raeburn2005} that there exists a projection $p$ such that $p C^*(F) p$ is a full corner in $C^*(F)$, and that there is an isomorphism $\pi:C^*(E) \cong pC^*(F)p$ such that $\pi(s_v) = t_v$ for each $v\in E^0$, $\pi(s_\mu) = t_{\phi^{-1}(\mu)}$ for each $\mu \in E^*$.
\end{rmk}

The goal for this section is the following theorem.

\begin{theorem}\label{mainthm1graph}
 Let $E$ be a directed graph and $(F,M)$ be a Drinen-Tomforde desingularisation of $E$. Let $\phi_\infty: E^0 F^\infty \to \partial E$ be the homeomorphism from Theorem~\ref{dg boundary is top inv under desing}, let $p$ and $\pi$ be as in Remark~\ref{isom for desing}. Then $\pi(D_E) = pD_Fp$, and there exist homeomorphisms $h_E:\partial E \to \Delta(D_E)$ and $h:E^0 F^\infty \to \Delta(p D_F p)$ such that the following diagram commutes.
\begin{center}
\begin{tikzpicture}[>=stealth,scale=0.7]
    \node (e0f) at (0,1.5) {$E^0F^\infty$};
    \node (deltapdp) at (0,0) {$\Delta(pD_Fp)$}
        edge[<-] node[auto] {$h$} (e0f);
    \node (partiale) at (4,1.5) {$\partial E$}
        edge[<-] node[auto,swap] {$\phi_\infty$} (e0f);
    \node (deltad) at (4,0) {$\Delta(D_E)$}
        edge[<-] node[auto,swap] {$h_E$} (partiale)
        edge[<-] node[auto] {$\pi^*$} (deltapdp);
\end{tikzpicture}
\end{center}
\end{theorem}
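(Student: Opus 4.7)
My plan is to construct the homeomorphisms $h_E$ and $h$, then prove the equality $\pi(D_E) = pD_Fp$, and finally verify that the diagram commutes by checking on a convenient generating set.

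For the first step, for $x \in \partial E$ and $\mu \in E^*$ I would set $h_E(x)(s_\mu s_\mu^*) := 1$ if $x \in \Zz(\mu)$ and $0$ otherwise, and extend to a $*$-homomorphism on $D_E$. Well-definedness amounts to translating each relation among the generating projections $\{s_\mu s_\mu^*\}$ of $D_E$ into a set-theoretic identity among cylinders in $\partial E$: the orthogonality $s_\mu s_\mu^* s_\nu s_\nu^* = 0$ for incomparable $\mu,\nu$ corresponds to $\Zz(\mu)\cap\Zz(\nu)=\emptyset$, and \eqref{dgck3} corresponds to the partition $\Zz(v) \cap \partial E = \bigsqcup_{\nu \in vE^{\leq n}} \Zz(\nu) \cap \partial E$. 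To see $h_E$ is a bijection onto $\Delta(D_E)$, I would note that any character $\chi$ determines a chain $A_\chi := \{\mu \in E^* : \chi(s_\mu s_\mu^*) = 1\}$ in the prefix order (since projections of any fixed length are pairwise orthogonal) and that \eqref{dgck3} at the tail of the chain forces it either to extend to an infinite path or to terminate at a singular vertex, so $A_\chi$ gives a unique boundary path. Matching basic cylinder neighborhoods yields the homeomorphism. The same construction applied to the row-finite graph $F$ (so $\partial F = F^\infty$) yields $D_F \cong C_0(F^\infty)$; since $p = \sum_{v \in E^0} t_v$ corresponds to the characteristic function of the clopen subset $E^0 F^\infty \subset F^\infty$, we obtain $pD_Fp \cong C_0(E^0 F^\infty)$ and hence $h: E^0 F^\infty \to \Delta(pD_Fp)$.

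For the second step, $\pi(D_E)$ is the closure of the span of $\{t_{\phi^{-1}(\mu)} t_{\phi^{-1}(\mu)}^* : \mu \in E^*\}$, so by Proposition~\ref{phibij} it equals $\overline{\text{span}}\{t_\nu t_\nu^* : \nu \in E^0 F^* E^0\}$. On the other hand $p t_\nu t_\nu^* p = t_\nu t_\nu^*$ iff $r(\nu) \in E^0$, so $pD_Fp = \overline{\text{span}}\{t_\nu t_\nu^* : \nu \in E^0 F^*\}$. The inclusion $\pi(D_E) \subseteq pD_Fp$ is immediate; for the reverse I must show $t_\alpha t_\alpha^* \in \pi(D_E)$ whenever $\alpha \in E^0 F^* \setminus E^0 F^* E^0$. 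Such an $\alpha$ necessarily ends inside some collapsible $\mu \in M$, so I write $\alpha = \beta \mu_1 \cdots \mu_k$ with $\beta \in E^0 F^* E^0$ ending at $r(\mu)$. By \eqref{c3} the only $F$-edge into $r(\mu)$ is $\mu_1$, so \eqref{dgck3} at $r(\mu)$ gives $t_{r(\mu)} = t_{\mu_1} t_{\mu_1}^*$, whence $t_{\beta\mu_1} t_{\beta\mu_1}^* = t_\beta t_\beta^*$. Iterating \eqref{dgck3} at each $r(\mu_j)$ (valid by \eqref{c2} and row-finiteness, with summands the edge $\mu_j$ together with the finitely many entries $e$ of $\mu$ at $r(\mu_j)$, each of which has $s(e) \in E^0$ by \eqref{c1}) then yields the finite identity
\[
t_\alpha t_\alpha^* \;=\; t_\beta t_\beta^* \;-\; \sum_{j=2}^{k}\ \sum_{\substack{e \in F^1,\ r(e) = r(\mu_j),\\ e \neq \mu_j}} t_{\beta\mu_1 \cdots \mu_{j-1} e}\, t_{\beta\mu_1 \cdots \mu_{j-1} e}^*,
\]
whose right-hand side visibly lies in $\pi(D_E)$ since each $\beta\mu_1 \cdots \mu_{j-1} e$ belongs to $E^0 F^* E^0$.

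For the third step, commutativity reduces (by Step~2) to checking agreement on the generators $\{t_{\phi^{-1}(\eta)} t_{\phi^{-1}(\eta)}^* : \eta \in E^*\}$ of $pD_Fp$. For $\lambda \in E^0 F^\infty$ the lower route gives $h(\lambda)(t_{\phi^{-1}(\eta)} t_{\phi^{-1}(\eta)}^*) = 1$ iff $\lambda \in \Zz(\phi^{-1}(\eta))$, while the upper route gives $(\pi^* \circ h_E \circ \phi_\infty)(\lambda)(t_{\phi^{-1}(\eta)} t_{\phi^{-1}(\eta)}^*) = h_E(\phi_\infty(\lambda))(s_\eta s_\eta^*) = 1$ iff $\phi_\infty(\lambda) \in \Zz(\eta)$. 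These are equivalent: decomposing $\lambda$ via Lemma~\ref{fhatpaths} into blocks $l^i \in (F^1 \cap E^1) \cup F^*(M)$, the initial segment of $\lambda$ coincides with $\phi^{-1}(\eta)$ iff the $\phi'$-images of the corresponding initial blocks concatenate to $\eta$. The main obstacle I foresee is the inductive computation in Step~2, which uses each of \eqref{c1}--\eqref{c3} together with row-finiteness of $F$ in an essential way, and where care is needed to verify that the resulting decomposition is a finite sum lying entirely in $\pi(D_E)$.
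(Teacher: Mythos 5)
Your Steps 2 and 3 are essentially the paper's argument: the equality $\pi(D_E)=pD_Fp$ is obtained there by exactly the same mechanism (using \eqref{c3} to absorb $\mu_1$ and \eqref{dgck3} at the row-finite vertices $r(\mu_j)$, phrased as an induction rather than your telescoping identity, and the fact that entries of collapsible paths start at vertices of $E^0$ is used implicitly via \eqref{c1}), and the commutativity of the diagram is likewise checked on the generators $s_\mu s_\mu^*$, $\mu\in E^*$, splitting into the cases $\phi_\infty(x)\in E^\infty$ and $\phi_\infty(x)\in E^*$. Your construction of $h$ from $pD_Fp$ being an ideal of $D_F$ corresponding to the open set $E^0F^\infty$ is also the paper's route (via the restriction-of-characters homeomorphism).

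The genuine gap is in Step 1, in the existence of the character $h_E(x)$ (and of $h_F$, on which your description of $h$ also depends). You assert that well-definedness ``amounts to translating each relation among the generating projections into a set-theoretic identity among cylinders,'' but $D_E$ is a concrete subalgebra of $C^*(E)$, not a universal algebra presented by the relations you list; checking that those particular relations are consistent with the prescription $s_\mu s_\mu^*\mapsto \chi_{\Zz(\mu)}(x)$ does not show that the functional is well defined on $\lsp\{s_\mu s_\mu^*\}$ unless you also prove that these are \emph{all} the relations, which is essentially the statement being proved. The paper closes this gap with a norm estimate: for a finite set $F\subset E^*$ one expands $\sum_{\mu\in F}b_\mu s_\mu s_\mu^*$ in the mutually orthogonal projections $q^F_\mu$ of Lemma~\ref{L_snusnu*=sumqnu}, and the crucial point is the claim that $q^F_{x_1\dots x_n}\neq 0$ when $x\in\partial E$ and $n$ is maximal with $x_1\dots x_n\in F$ (proved by a case analysis on whether $s(x)$ is a source, an infinite receiver, or $x$ is infinite, together with Remark~\ref{lemma about projections nonzero}). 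This is precisely where the boundary-path hypothesis enters; for a finite path ending at a regular vertex the analogous functional would violate \eqref{dgck3}, so some such argument is unavoidable. Your proof would be complete if you either supplied this estimate or simply invoked Theorem~\ref{boundary homeo to spectrum of diagonal} (applied to both $E$ and $F$), which is how the paper's proof of the main theorem proceeds; the remainder of your outline then goes through.
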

We prove Theorem~\ref{mainthm1graph} on page \pageref{proofofmainthm1graph}. First, we establish some technical results.

\begin{rmk}\label{ranges are mut orth}
  Let $E$ be a directed graph, and let $\mu,\nu \in E^*$. Then
\begin{equation}\label{dg product of range projs}
    (s_\mu s_\mu^*)(s_\nu s_\nu^*) = \begin{cases}
      s_\mu s_\mu^* \text{ if } \mu = \nu\nu'\\
      s_\nu s_\nu^*\text{ if } \nu = \mu\mu'\\
      0\text{ otherwise.}
    \end{cases}
\end{equation}
This result is is proved for row-finite directed graphs as~\cite[Corollary~1.14(b)]{Raeburn2005}. The proof is only marginally different for arbitrary directed graphs, for a detailed argument see~\cite[Lemma~2.4.4]{WebsterPhD}.
\end{rmk}

\begin{lemma}\label{L_snusnu*=sumqnu}
Let $E$ be a directed graph, and let $F\subset E^*$ be finite. For $\mu \in F$, define
\[
    q_\mu^F:= s_\mu s_\mu^* \prod_{\mu \mu' \in F\setminus\{\mu\}}(s_\mu s_\mu^* - s_{\mu\mu'} s_{\mu\mu'}^*).
\]
Then the $q_\mu^F$ are mutually orthogonal projections in $\lsp \{s_\mu s_\mu^* : \mu \in F\}$, and for each $\nu \in F$, we have
\begin{equation}\label{eq_snusnu*=sumqnu}
s_\nu s_\nu^* = \sum_{\nu\nu' \in F} q_{\nu\nu'}^F.
\end{equation}
\begin{proof}
By Remark~\ref{ranges are mut orth}, $p:\lambda \to s_\lambda s_\lambda^*$ is a Boolean Representation of $E$ in the sense of~\cite[Definition~3.1]{SW2010}. The result then follows from~\cite[Lemma~3.1]{SW2010}.
\end{proof}
\end{lemma}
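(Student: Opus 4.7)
The plan is to deduce everything from Remark~\ref{ranges are mut orth}, which tells us that the projections $\{s_\mu s_\mu^*:\mu\in E^*\}$ pairwise commute and that any product of them either vanishes (when two labels are incomparable in the prefix order) or collapses to $s_\xi s_\xi^*$ for the longest label $\xi$ appearing.

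First I would verify that each $q_\mu^F$ is a projection in $\lsp\{s_\mu s_\mu^*:\mu\in F\}$. Since $s_{\mu\mu'}s_{\mu\mu'}^*\leq s_\mu s_\mu^*$ for every extension $\mu\mu'$, each factor $s_\mu s_\mu^*-s_{\mu\mu'}s_{\mu\mu'}^*$ is a projection, and the factors commute by Remark~\ref{ranges are mut orth}, so $q_\mu^F$ is a projection. Expanding the defining product as a signed sum indexed by subsets of proper extensions of $\mu$ in $F$ and invoking Remark~\ref{ranges are mut orth} to simplify each monomial to $0$ or a single $s_\xi s_\xi^*$ places $q_\mu^F$ in the span. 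For mutual orthogonality, fix distinct $\mu,\nu\in F$. If neither extends the other then $s_\mu s_\mu^* s_\nu s_\nu^*=0$, killing $q_\mu^F q_\nu^F$. Otherwise, without loss of generality $\mu=\nu\eta$ with $\eta$ nontrivial: the factor $(s_\nu s_\nu^*-s_\mu s_\mu^*)$ appears in $q_\nu^F$, and since $s_\mu s_\mu^* s_\nu s_\nu^*=s_\mu s_\mu^*$, multiplying by $q_\mu^F\leq s_\mu s_\mu^*$ yields $s_\mu s_\mu^*-s_\mu s_\mu^*=0$.

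For the sum formula~\eqref{eq_snusnu*=sumqnu}, I would first reduce to the case $F\subset\{\nu\}\cup \nu E^*$: replacing $F$ by $F_\nu:=\{\mu\in F:\nu\text{ is a prefix of }\mu\}$ does not change any $q_\mu^F$ that appears in the sum, nor the sum itself, because for $\mu\in F_\nu$ the proper extensions of $\mu$ in $F$ automatically lie in $F_\nu$. Assuming this reduction, I would induct on $|F|$. The base case $F=\{\nu\}$ is immediate. For the inductive step, pick a maximal $\mu_0\in F$ under the prefix order (necessarily $\mu_0\neq\nu$), and set $F':=F\setminus\{\mu_0\}$. Maximality gives $q_{\mu_0}^F=s_{\mu_0}s_{\mu_0}^*$, and for $\mu\in F'$ one has $q_\mu^F=q_\mu^{F'}$ unless $\mu$ is a proper prefix of $\mu_0$, in which case $q_\mu^F=q_\mu^{F'}-q_\mu^{F'}s_{\mu_0}s_{\mu_0}^*$. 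Summing over $F$ and applying the inductive hypothesis to $F'$ reduces the task to the cancellation identity
\[
\sum_{\mu\in F',\ \mu\prec\mu_0} q_\mu^{F'}\,s_{\mu_0}s_{\mu_0}^*=s_{\mu_0}s_{\mu_0}^*,
\]
where $\prec$ denotes proper prefix.

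This cancellation is the main obstacle. The proper prefixes of $\mu_0$ in $F'$ are totally ordered, so they admit a unique longest element $\mu^*$, which exists because $\nu\in F'$ is one such prefix. For any strictly shorter $\mu$ in $F'$ with $\mu\prec\mu^*\prec\mu_0$, the factor $(s_\mu s_\mu^*-s_{\mu^*}s_{\mu^*}^*)$ in $q_\mu^{F'}$, once multiplied by $s_{\mu_0}s_{\mu_0}^*$, becomes $s_{\mu_0}s_{\mu_0}^*-s_{\mu_0}s_{\mu_0}^*=0$, so the whole $\mu$-term vanishes. For $\mu=\mu^*$, every factor $(s_{\mu^*}s_{\mu^*}^*-s_\lambda s_\lambda^*)$ in $q_{\mu^*}^{F'}$ satisfies $s_\lambda s_\lambda^* s_{\mu_0}s_{\mu_0}^*=0$: the maximality of $\mu^*$ among proper prefixes of $\mu_0$ in $F'$ rules out $\lambda$ being a prefix of $\mu_0$, and the maximality of $\mu_0$ in $F$ rules out $\mu_0$ being a prefix of $\lambda$, so $\lambda$ and $\mu_0$ are incomparable. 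Thus $q_{\mu^*}^{F'}s_{\mu_0}s_{\mu_0}^*=s_{\mu_0}s_{\mu_0}^*$, completing the induction.
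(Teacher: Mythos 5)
Your proof is correct, but it follows a genuinely different route from the paper's. The paper disposes of Lemma~\ref{L_snusnu*=sumqnu} in two lines: Remark~\ref{ranges are mut orth} says exactly that $\lambda \mapsto s_\lambda s_\lambda^*$ is a Boolean representation in the sense of \cite[Definition~3.1]{SW2010}, and the whole statement is then quoted from \cite[Lemma~3.1]{SW2010}, where the combinatorial work has been done once and for all. You instead reprove that combinatorics from scratch using only the multiplication rule~\eqref{dg product of range projs}: projectionhood and membership in the span by expanding the product and collapsing each monomial to $0$ or a single range projection with label in $F$; orthogonality by locating the factor $(s_\nu s_\nu^* - s_\mu s_\mu^*)$ when $\mu$ properly extends $\nu$; and the sum formula by first restricting to $F_\nu$ (legitimate, since proper extensions in $F$ of an extension of $\nu$ already lie in $F_\nu$), then inducting on $|F|$ by deleting a prefix-maximal $\mu_0 \neq \nu$, which reduces everything to the cancellation $\sum_{\mu \prec \mu_0} q_\mu^{F'} s_{\mu_0}s_{\mu_0}^* = s_{\mu_0}s_{\mu_0}^*$; your treatment of that identity via the longest proper prefix $\mu^*$ of $\mu_0$ in $F'$ (terms below $\mu^*$ die on the factor $(s_\mu s_\mu^* - s_{\mu^*}s_{\mu^*}^*)$, while for $\mu^*$ itself every remaining factor involves a label incomparable with $\mu_0$) is exactly right, using maximality of $\mu^*$ and of $\mu_0$. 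What the paper's citation buys is brevity and reuse of a result stated in the generality of Boolean representations; what your argument buys is self-containedness and a transparent record that nothing beyond Remark~\ref{ranges are mut orth} is needed.
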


\begin{rmk}\label{lemma about projections nonzero}
Let $A$ be a $C^*$-algebra, let $p$ be a projection in $A$, let $Q$ be a finite set of commuting subprojections of $p$ and let $q_0$ be a nonzero subprojection of $p$. Then $\prod_{q\in Q}(p-q)$ is a projection. If $q_0$ is orthogonal to each $q \in Q$, then $q_0\prod_{q\in Q}(p-q) = q_0$, so in particular, $\prod_{q\in Q}(p-q) \neq 0$. The proof if this is relatively simple, details can be found in~\cite[Lemma A.0.7]{WebsterPhD}.
\end{rmk}

\begin{rmk}\label{different form of qmuf}
Let $E$ be a directed graph, and let $F\subset E^*$ be finite. For $\mu \in F$, let $F_\mu =\{\mu' \in s(\mu)E \setminus \{s(\mu)\} : \mu\mu' \in F\}$. It follows from an induction on $|F_\mu|$ that
\[
    q_\mu^F = s_\mu \Big( \prod_{\mu' \in F_\mu}(s_{s(\mu)} - s_{\mu'} s_{\mu'}^*)\Big)s_\mu^*.
\]
\end{rmk}

We say that $\mu,\nu \in E^*$ have \emph{common extension} if either $\mu = \nu\nu'$ or $\nu=\mu\mu'$, and call the longer path the \emph{minimal common extension} of $\mu$ and $\nu$. A set $F \subset E^*$ is \emph{exhaustive} if for every $\mu\in E^*$ there exists $\nu \in F$ such that $\mu$ and $\nu$ have common extension. We denote the set of finite exhaustive sets by $\FE(E)$, and for a vertex $v$ we define $v\FE(E) := \{F \in \FE(E): F \subset vE^*\}$.

\begin{theorem}\label{boundary homeo to spectrum of diagonal}
Let $E$ be a directed graph. Then $D = \clsp\{s_\mu s_\mu^* : \mu \in E\}$, and for each $x \in \partial E$ there exists a unique $h_E(x) \in \Delta(D)$ such that
\[
    h_E(x)(s_\mu s_\mu^*) =
        \begin{cases}
            1 &\text{if $x \in \Zz(\mu)$}\\
            0 &\text{otherwise}.
        \end{cases}
\]
Moreover, $x \mapsto h_E(x)$ is a homeomorphism of $\partial E$ onto $\Delta(D)$.
\begin{proof}
That $D = \clsp \{s_\mu s_\mu^* : \mu \in E^*\}$ follows from equation~\eqref{dg product of range projs}.

Fix $x\in\partial E$ and $\sum_{\mu \in F}b_\mu s_\mu s_\mu^* \in \lsp\{s_\mu s_\mu^* : \mu \in E^*\}$. Let $n=\max\{p \in \NN: x_1\dots x_p \in F\}$, and define $F_x:=\{\mu' \in x(n)E\setminus\{x(n)\} : x(0,n)\mu' \in F\}$.
\begin{claim}\label{qxneq0}
  The projection $q_{x_1\dots x_n}^F \neq 0$.
  \begin{proof}
If $s(x_n)E^* = \emptyset$, then $F_x = \emptyset$, and hence $q_{x_1\dots x_n}^F = s_{x_1\dots x_n}s_{x_1\dots x_n}^* \neq 0.$ Now suppose that $s(x_n)E^* \neq \emptyset$. We first show that there exists $\nu \in s(x_n)E^*$ such that for each $\mu' \in F_x$, $\nu$ and $\mu'$ have no common extension. We argue in cases.
\begin{enumerate}
\renewcommand{\theenumi}{\roman{enumi}}
\item If $s(x)$ is a source in $E$ and $|x| > n$, let $\nu = x_{n+1}\dots x_{|x|}$. Then by choice of $n$, $\nu$ has no common extension with any $\mu'$ in $F_x$.
\item If $s(x)$ is an infinite receiver, such a $\nu$ exists since $|F_x| \leq |F| < |s(x)E^*| = \infty$.
\item If $x \in E^\infty$, let $k= \max\{|\mu'| :\mu'\in F_x \}$. Then it follows from our choice of $n$ that $\nu = x_{n+1}\dots x_{n+k}$ is not a common extension of any $\mu'$ in $F_x$.
\end{enumerate}

By Remark~\ref{ranges are mut orth}, we have $s_\nu s_\nu^* s_{\mu'} s_{\mu'}^* = 0$ for all $\mu' \in F_x$. Applying Lemma~\ref{lemma about projections nonzero} with $p = s_{s(x_n)}$, $q_0 = s_\nu s_\nu^*$, $Q = F_x$, we have $\prod_{\mu'\in F_x}(s_{s(x_n)} - s_{\mu'}s_{\mu'}^*) \neq 0$. So
\[
q_{x_1\dots x_n}^F = s_{x_1\dots x_n} \prod_{\mu'\in F_x}(s_{s(x_n)} - s_{\mu'}s_{\mu'}^*) s_{x_1\dots x_n}^*  \neq 0.\pc\qedhere
\]
\end{proof}
\end{claim}
By the above claim,
\[
\Big\| \sum_{\nu \in F} b_\mu s_\mu s_\mu^* \Big\|
= \Big\| \sum_{\nu \in F} \Big(\sum_{\substack{\mu \in F\\\nu \in \Zz(\mu)}} b_\mu \Big) q^F_\nu \Big\|
= \max_{\substack{\nu \in F\\q_\nu^F \neq 0}} \Big\{\Big|\sum_{\substack{\mu \in F\\\nu \in \Zz(\mu)}} b_\mu\Big|\Big\}
\geq \Big|\sum_{\substack{\mu \in F\\x_1\dots x_n \in \Zz(\mu)}} b_\mu\Big|
\]
Hence the formula
\begin{equation}\label{want h(x) to be this}
h_E(x)\big(\sum_{\mu \in F}b_\mu s_\mu s_\mu^*\big) = \sum_{\substack{\mu\in F\\x \in \Zz(\mu)}} b_\mu
\end{equation}
determines a well-defined, norm-decreasing linear map $h_E(x)$ on $\lsp\{s_\mu s_\mu^* : \mu \in E\}$.

We now show that $h_E(x)$ is a homomorphism. Since $h_E(x)$ is linear and norm-decreasing, it suffices to calculate
\begin{align*}
h_E(x)(s_\mu s_\mu^* s_\alpha s_\alpha^*) &= \begin{cases}1 &\text{if $\alpha \in \Zz(\mu)$ and $x \in \Zz(\alpha)$}\\&\text{or $\mu \in \Zz(\alpha)$ and $x \in \Zz(\mu)$,}\\ 0 &\text{otherwise}\end{cases}\\
&=\begin{cases}1 &\text{if $x \in \Zz(\alpha)\cap\Zz(\mu)$}\\ 0 &\text{otherwise.}\end{cases}\\
&= h_E(x)(s_\mu s_\mu^*) h_E(x)(s_\alpha s_\alpha^*).
\end{align*}
Now $h(x)$ is a nonzero bounded homomorphism on a dense subspace of $D$, and hence extends uniquely to a nonzero homomorphism $h(x):D \to \CC$. It remains to show that $h_E:\partial E \to \Delta(D)$ is a homeomorphism. The trickiest part is to show that $h_E$ is onto.
\begin{claim}
The map $h_E$ is surjective.
\begin{proof}
Fix $\phi \in \Delta(D)$. For each $n \in \NN$, $\{s_\mu s_\mu^* : |\mu| =n \}$ are mutually orthogonal projections, thus there exists at most one $\nu^n \in E^n$ such that $\phi(s_{\nu^n} s_{\nu^n}^*)=1$. Let
\[
S:=\{n\in\NN: \text{ there exists } \nu^n \in E^n \text{ such that } \phi(s_{\nu^n}s_{\nu^n}) = 1\}.
\]
Since $\phi$ is nonzero, $S$ is nonempty. If $\nu = \mu\nu'$ and $\phi(s_\nu s_\nu^*)=1$, then
\[
    1= \phi(s_\nu s_\nu^*) = \phi(s_\nu s_\nu^* s_\mu s_\mu^*) = \phi(s_\nu s_\nu^*) \phi(s_\mu s_\mu^*),
\]
so $\phi(s_\mu s_\mu^*)=1$. It follows that either $S = \NN$, or to $\{1,\dots,N\}$ for some $N$.

If $S = \NN$, define $x\in E^\infty$ by $x(0,n) = \nu^n$ for all $n$. If $S = \{1, \dots, N\}$, define $x:= \nu^N$. That $x \in \partial E$ is trivial if $S=\NN$, and follows from~\eqref{dgck3} otherwise. To see that $h_E(x) = \phi$, notice that for each $\mu \in E^*$ we have
\begin{align*}
    \phi(s_\mu s_\mu^*) = 1 &\iff |\mu| \in S \text{ and }\nu^{|\mu|} = \mu \\&\iff x(0,|\mu|)=\mu \\&\iff h_E(x)(s_\mu s_\mu^*) = 1.
\end{align*}
Since both $\phi(s_\mu s_\mu^*)$ and $h_E(x)(s_\mu s_\mu^*)$ only take values in $\{0,1\}$, it follows that $h_E(x) = \phi$.
\pc\end{proof}
\end{claim}

To see $h$ is injective, suppose that $h_E(x) = h_E(y)$. Then for each $n \in \NN$, let $n_x = \min\{n,|x|\}$. Then we have
\[
h_E(y)(s_{x(0,n_x)} s_{x(0,n_x)}^*) = h_E(x)(s_{x(0,n_x)} s_{x(0,n_x)}^*) = 1
\]
Hence $y(0,n \wedge |x|) = x(0,n \wedge |x|)$ for all $n \in \NN$. By symmetry, we also have that $y(0,n \wedge |y|) = x(0,n \wedge |y|)$ for all $n$. In particular, $|x| = |y|$ and $y(0,n) = x(0,n)$ for all $n \leq |x|$. Thus $x=y$.

Recall that $\Delta(D)$ carries the topology of pointwise convergence. For openness, it suffices to check that $h_E^{-1}$ is continuous. Suppose that $h(x^n) \to h(x)$. Fix a basic open set $\Zz(\mu)$ containing $x$, so $h(x)(s_\mu s_\mu^*) = 1$. Since $h(x^n)(s_\mu s_\mu^*) \in \{0,1\}$ for all $n$, for large enough $n$, we have $h(x^n)(s_\mu s_\mu^*) = 1$. So $x^n \in \Zz(\mu)$. For continuity, a similarly straightforward argument shows that if $x^n \to x$, then $h(x^n)(s_\mu s_\mu^*) \to h(x)(s_\mu s_\mu^*)$. This convergence extends to $\lsp\{s_\mu s_\mu^* : \mu \in E^*\}$ by linearity, and to $D$ by an $\eps/3$ argument.
\end{proof}
\end{theorem}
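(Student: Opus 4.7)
\textbf{Proof proposal for Theorem~\ref{boundary homeo to spectrum of diagonal}.}

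The plan is to define the desired character $h_E(x)$ first on the $*$-subalgebra $A := \lsp\{s_\mu s_\mu^* : \mu \in E^*\}$ by the formula $h_E(x)(s_\mu s_\mu^*) = \chi_{\Zz(\mu)}(x)$ extended linearly, and then to extend by continuity to all of $D$. That $D = \clsp\{s_\mu s_\mu^* : \mu \in E^*\}$ is almost automatic from Remark~\ref{ranges are mut orth}: the product of two range projections is again a range projection or zero, so $A$ is a $*$-subalgebra of $D$ containing every generator.

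The crux is boundedness of the linear functional on $A$. Using Lemma~\ref{L_snusnu*=sumqnu}, I would rewrite $\sum_{\mu \in F} b_\mu s_\mu s_\mu^*$ in the mutually orthogonal basis $\{q_\nu^F : \nu \in F\}$, so that the norm equals $\max_{q_\nu^F \neq 0}\bigl|\sum_{\mu : \nu \in \Zz(\mu)} b_\mu\bigr|$, while the putative value of $h_E(x)$ on the sum is $\sum_{\mu : x \in \Zz(\mu)} b_\mu$. To dominate the latter by the former, choose $n = \max\{p : x(0,p) \in F\}$ and show $q_{x(0,n)}^F \neq 0$. Via Remark~\ref{lemma about projections nonzero}, this reduces to producing a path $\nu$ out of $s(x_n)$ whose cylinder is orthogonal (in the sense of~\eqref{dg product of range projs}) to each $\Zz(\mu')$ for $\mu'$ in $F_x := \{\mu' : x(0,n)\mu' \in F\}$; this is exactly where $x \in \partial E$ is essential, splitting into three subcases: (i) $x$ infinite, where a sufficiently long tail of $x$ works; (ii) $x$ finite with $s(x)$ a source, where a tail of $x$ past level $n$ already works; (iii) $x$ finite with $s(x)$ an infinite receiver, where $|F_x| \le |F| < |s(x)E^*| = \infty$ gives the required $\nu$. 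I expect this case analysis to be the main technical obstacle.

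Once $h_E(x)$ is bounded and linear on $A$, it extends uniquely to $D$, and a direct check via~\eqref{dg product of range projs} shows it is multiplicative on the generators, hence a $*$-homomorphism on $D$. For bijectivity: injectivity follows because $h_E(x) = h_E(y)$ forces $x$ and $y$ to lie in the same cylinder sets, hence to agree coordinatewise; for surjectivity, given $\phi \in \Delta(D)$, the mutual orthogonality of $\{s_\mu s_\mu^* : |\mu|=n\}$ produces at most one $\nu^n \in E^n$ with $\phi(s_{\nu^n} s_{\nu^n}^*) = 1$, multiplicativity of $\phi$ forces the $\nu^n$ to agree on initial segments, and the resulting path either runs to infinity (giving $x \in E^\infty$) or terminates at an $N$ where~\eqref{dgck3} applied through $\phi$ forces $s(\nu^N)$ to be singular; either way $x \in \partial E$ with $h_E(x) = \phi$. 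Finally, $h_E$ carries the basic open sets $\Zz(\mu) \cap \partial E$ to the basic weak-$*$ open sets $\{\phi : \phi(s_\mu s_\mu^*) = 1\}$ in $\Delta(D)$, and an $\eps/3$ argument extending convergence from generators to all of $D$ delivers the homeomorphism claim in both directions.
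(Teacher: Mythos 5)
Your proposal is correct and follows essentially the same route as the paper: the same orthogonal decomposition $\{q_\nu^F\}$ from Lemma~\ref{L_snusnu*=sumqnu}, the same three-case argument (infinite path, source, infinite receiver) that $q_{x(0,n)}^F \neq 0$ via Remark~\ref{lemma about projections nonzero}, and the same surjectivity, injectivity and $\eps/3$-continuity arguments. The only detail to add is the trivial case where $F_x = \emptyset$ (e.g.\ $x$ finite of length exactly $n$ with $s(x)$ a source), where $q_{x(0,n)}^F = s_{x(0,n)}s_{x(0,n)}^* \neq 0$ directly, exactly as the paper notes before its case split.
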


We can now prove our main result.

\begin{proof}[Proof of Theorem~\ref{mainthm1graph}.]\label{proofofmainthm1graph}
The projection $p$ from Remark~\ref{isom for desing} satisfies
\begin{equation}\label{eqn for proj}
    pt_\mu t_\mu^*p = \begin{cases} t_\mu t_\mu^* &\text{if } r(\mu) \in E^0\\0 &\text{otherwise.}\end{cases}
\end{equation}
We will show that $\pi$ maps $D_E$ onto $p D_Fp$. It follows from~\eqref{eqn for proj} that $\pi(D_E) \subset p D_F p$. To see the reverse inclusion, fix $\mu \in F^*$. If $r(\mu) \notin E^0$ then $p t_\mu t_\mu^* p = 0 \in \pi(D_E)$, so suppose that $r(\mu) \in E^0$. If $s(\mu) \in E^0$, then $p t_\mu t_\mu^* p = t_\mu t_\mu^* = \pi(s_{\phi(\mu)}s_{\phi(\mu)}^*) \in \pi(D_E).$ Now suppose that $s(\mu) \notin E^0$, then $s(\mu) = s(\nu_n)$ for some collapsible path $\nu\in F^\infty$ and $n \in \NN$. Since $\nu$ has no exits except at $r(\nu)$, we have $\mu = \mu' \nu_n$ for $\mu' = \mu(0,|\mu|-1)$. Furthermore, $s(\mu')F^1$ is finite, thus~\eqref{dgck3} implies that
\begin{equation}\label{induction for dgraph thm}
p s_\mu s_\mu^* p = p s_{\mu'} s_{\nu_n} s_{\nu_n}^* s_{\mu'}^* p = p s_{\mu'} s_{\mu'}^* p - \sum_{f \in s(\mu')F^1\setminus\{{\nu_n}\}} p s_{\mu'} s_f s_f^* s_{\mu'}^* p.
\end{equation}
An induction on $n$ gives $p s_{\mu'} s_{\mu'}^* p \in \pi(D_E)$. It then follows from~\eqref{induction for dgraph thm} that $p s_{\mu} s_{\mu}^* p \in \pi(D_E)$, and hence $\pi(D_E) = pD_Fp.$

We now construct the homeomorphism $h$. Since $p$ commutes with $D_F$, the space $p D_F p$ is an ideal of $D_F$. Then~\cite[Propositions A26(a) and A27(b)]{RW1998} imply that the map $k:\phi \mapsto \phi|_{p D_F p}$ is a homeomorphism of $\{\phi\in\Delta(D_F): \phi|_{p D_F p} \neq 0\}$ onto $\Delta(p D_F p).$ Since $F$ has no singular vertices, $\partial F = F^\infty$. Let $h_F:F^\infty \to \Delta(D_F)$ be the homeomorphism obtained from Theorem~\ref{boundary homeo to spectrum of diagonal}. Then $h_F(x) \in \dom(k)$ for all $x \in E^0F^\infty$. Define $h:=k \circ h_F|_{E^0F^\infty}: E^0 F^\infty \to \Delta(p D_F p)$.

We aim to show that $h_E \circ \phi_\infty = \pi^*\circ h$. Let $x \in E^0 F^\infty$, and fix $\mu \in E^*$. Since $(h_E \circ \phi_\infty) (x)$ and $h(x)$ are homomorphisms, and since $\pi$ is an isomorphism, it suffices to show that
\begin{equation}\label{intertwining diagram commutes}
(h_E \circ \phi_\infty)(x) (s_\mu s_\mu^*) = (\pi^*\circ h)(x) (s_\mu s_\mu^*).
\end{equation}
Since $\mu \in E^*$, we have $t_{\phi^{-1}(\mu)} t_{\phi^{-1}(\mu)}^* \in p D_F p$. Then since $r(x) \in E^0$, the right-hand side of~\eqref{intertwining diagram commutes} becomes
\begin{align*}
\pi^* ( h(x)) (s_\mu s_\mu^*) = h(x) (t_{\phi^{-1}(\mu)} t_{\phi^{-1}(\mu)}^*) &= h_F(x)|_{p D_F p} (t_{\phi^{-1}(\mu)} t_{\phi^{-1}(\mu)}^*)\\
&=
\begin{cases}
    1 &\text{if } x \in \Zz(\phi^{-1}(\mu))\\
    0 &\text{otherwise.}
\end{cases}
\end{align*}
We break the left-hand side of~\eqref{intertwining diagram commutes} into cases: (i) $\phi_\infty(x) \in E^\infty$, or (ii) $\phi_\infty(x) \in E^*$.
In case (i), since $\phi_\infty(x) \in \Zz(\mu)$ if and only if $x= \phi^{-1}(\mu)\phi_\infty^{-1}(\mu')$ for some $\mu' \in E^\infty$, the left-hand side of~\eqref{intertwining diagram commutes} becomes
\[
    h_E(\phi_\infty(x))(s_\mu s_\mu^*) =
        \begin{cases}
            1 &\text{if } \phi_\infty(x) \in \Zz(\mu)\\
             0&\text{otherwise}
        \end{cases}
        =
        \begin{cases}
            1 &\text{if } x \in \Zz(\phi^{-1}(\mu))\\
            0&\text{otherwise,}
        \end{cases}.
\]

In case (ii), $\phi_\infty(x) = \phi(x')$, where $x = x'\nu$ for some collapsible path $\nu \in M$. The left hand side of~\eqref{intertwining diagram commutes} then becomes
\[
    h_E(\phi(x'))(s_\mu s_\mu^*) =
    \begin{cases}
     1 &\text{if } \phi(x') \in \Zz(\mu)\\
     0&\text{otherwise.}
    \end{cases}
\]
Since $\phi$ is a bijection, $x' =\phi^{-1}(\mu) x''$ if and only if $\phi(x') = \mu \phi(x'')$, so equation~\eqref{intertwining diagram commutes} is satisfied, and thus $h_E \circ \phi_\infty(x) = \pi^*\circ h(x)$.
\end{proof}


\begin{thebibliography}{15}

\bibitem{BHRS2002} T. Bates, J.~H. Hong, I. Raeburn, and W. Szyma{\'n}ski, \emph{The ideal structure of the {$C\sp *$}-algebras of infinite graphs}, Illinois J. Math. \textbf{46} (2002), 1159--1176.

\bibitem{CK1980} J. Cuntz  and W. Krieger, \emph{A class of {$C\sp{\ast} $}-algebras and topological {M}arkov chains}, Invent. Math. \textbf{56} (1980), 251--268.

\bibitem{Drinen2000} D. Drinen, \emph{Viewing {AF}-algebras as graph algebras}, Proc. Amer. Math. Soc. \textbf{128} (2000), 1991--2000.

\bibitem{DT2005} D. Drinen  and M. Tomforde, \emph{The {$C\sp *$}-algebras of arbitrary graphs}, Rocky Mountain J. Math. \textbf{35} (2005), 105--135.

\bibitem{EW1980} M. Enomoto  and Y. Watatani, \emph{A graph theory for {$C\sp{\ast} $}-algebras}, Math. Japon. \textbf{25} (1980), 435--442.

\bibitem{KPRR1997} A. Kumjian, D. Pask, I. Raeburn, and J. Renault, \emph{Graphs, groupoids, and {C}untz-{K}rieger algebras}, J. Funct. Anal. \textbf{144} (1997), 505--541.

\bibitem{Paterson2002} A.~L.~T. Paterson, \emph{Graph inverse semigroups, groupoids and their $C^*$-algebras},  J. Operator Theory \textbf{48} (2002), 645--662.

\bibitem{PW2005} A.~L.~T. Paterson  and A.~E. Welch, \emph{Tychonoff's theorem for locally compact spaces and an elementary approach to the topology of path spaces}, Proc. Amer. Math. Soc. \textbf{133} (2005), 2761--2770.

\bibitem{Raeburn2005} I. Raeburn, Graph algebras, Published for the Conference Board of the Mathematical Sciences, Washington, DC, 2005, vi+113.

\bibitem{RW1998} I. Raeburn  and D.~P. Williams, Morita equivalence and continuous-trace {$C\sp*$}-algebras, American Mathematical Society, Providence, RI, 1998, xiv+327.

\bibitem{Renault1980} J. Renault, A groupoid approach to {$C\sp{\ast} $}-algebras, Springer, Berlin, 1980, ii+160.

\bibitem{SW2010} A. Sims  and S.~B.~G. Webster, \emph{A direct approach to co-universal algebras associated to directed graphs}, Bull. Malays. Math. Sci. Soc. \textbf{33} (2010), 211--220.

\bibitem{Szyma'nski2002a} W. Szyma{\'n}ski, \emph{The range of {$K$}-invariants for {$C\sp *$}-algebras of infinite graphs}, Indiana Univ. Math. J. \textbf{51} (2002), 239--249.

\bibitem{WebsterPhD} S.~B.~G. Webster, Directed Graphs and $k$-graphs: Topology of the Path Space and How It Manifests in the Associated $C^*$-algebra, PhD Thesis, University of Wollongong, 2010.





\end{thebibliography}
\end{document}